\documentclass[11pt,oneside,reqno]{article}
\usepackage{fix-cm}
\usepackage{authblk}
\usepackage{amssymb}
\usepackage{amsmath}
\usepackage{amsthm}
\usepackage{dsfont}
\usepackage{graphicx} 
\usepackage{varwidth}
\usepackage{geometry}

\DeclareMathOperator{\ld}{ld}
\DeclareMathOperator{\sopfr}{sopfr}

\newtheorem*{proposition}{Proposition}
\newtheorem*{question}{Question}
\theoremstyle{definition}
\newtheorem*{definition}{Definition}

\providecommand{\tabularnewline}{\\}

\newcommand{\keywords}[1]{\textbf{\textit{Keywords }} #1}

\title{A Ramanujanesque Suite}
\title{Learning from Ramanujan: Elementary Approaches to Profound Ideas}
\author{ 
Zachary P. Bradshaw$^a$ and Christophe Vignat$^b$\\
$^a$QodeX Quantum, Chicago, IL, zak@qodexquantum.ai\\
$^b$Corresponding Author, Department of Mathematics, Tulane University\\ New Orleans, LA, cvignat@tulane.edu
}
\date{ }
\begin{document}
\maketitle

\begin{abstract}
    We revisit several entries from Ramanujan’s notebooks which follow from more elementary arguments than a first glance may suggest. Our goal is to demystify these results through more accessible proofs while also shining some light on the web of interconnections within the notebooks and demonstrating the continuing relevance of Ramanujan’s methods. Classical and modern tools, such as multisection, telescoping sums, partial fraction decomposition, and Fourier analysis, are employed to reprove and extend identities originally presented without explanation. These contributions try not only to enrich our understanding of Ramanujan’s intuition but also to offer new avenues for exploration in number theory, special functions, and mathematical analysis.
\end{abstract}

\begin{quote}
    ``So I got a chance to browse through it for several weeks. It seemed quite like a revelation -- a completely new world to me, quite different from any mathematics book I had ever seen -- with much more appeal to the imagination, I must say. And frankly, it still seems very exciting to me and also retains that air of mystery which I felt at the time.'' 
    
    A. Selberg \cite{Selberg} about Ramanujan's Collected Papers book \cite{Ramanujan Collected}
\end{quote}
\keywords{Ramanujan notebooks, multisection of series, elliptic functions, classical analysis}
\section{Introduction.}

During his tragically short life, Srinivasa Ramanujan filled two notebooks with a remarkable array of mathematical discoveries, most presented without proof or explanation. These notebooks, compiled between 1903 and 1914, would go on to mystify and inspire generations of mathematicians. Decades later, Bruce Berndt undertook the monumental task of editing, proving, and contextualizing Ramanujan’s results, a tour de force that not only preserved Ramanujan’s legacy but also rendered an invaluable service to the mathematical community.

For a novice mathematician, encountering Ramanujan’s Notebooks, along with the subsequent discoveries published in the Lost Notebooks, can be a jarring experience. The sheer volume of novel and dazzling content is both exhilarating and daunting. These pages brim with originality and brilliance, but their depth is often obscured by cryptic notation, missing context, and, at times, results that seem disconnected. It is not uncommon to oscillate between awe and frustration while trying to navigate Ramanujan’s thought process.

Yet, amid this complexity, one occasionally stumbles upon a result whose elegance becomes fully apparent only after a simple, transparent argument reveals its inner structure. It is some of these moments of clarity, when Ramanujan’s genius becomes suddenly accessible, that we aim to capture and share here.

The message we wish to convey in this work is threefold:
\begin{itemize}
\item 
Alternative proofs of Ramanujan’s identities may sometimes open the door to new directions and generalizations;
\item 
Ramanujan’s notebooks are not merely a collection of isolated formulas, à la Carr’s Synopsis \cite{Carr}, but rather a complex web of interrelated ideas, making the search for hidden connections a rich and rewarding endeavor;
\item 
Many of those Ramanujan
identities which initially appear elementary are, upon closer inspection, remarkably deep.
\end{itemize} 
In what follows, we revisit a selection of entries from Ramanujan’s notebooks that, to our knowledge, have previously been explained only through elaborate or technical arguments. For each, we aim either to present a simpler proof or to offer fresh insight into the underlying structure of the result. Our hope is to inspire a renewed interest in Ramanujan’s work, especially among younger mathematicians, by showing that parts of this legendary corpus remain surprisingly accessible. Along the way, we highlight some of Ramanujan’s favorite computational tools, such as telescoping series, multisection techniques, and partial fraction decompositions, which he wielded with exceptional skill and creativity.

Throughout this paper, we use the term \emph{elementary} in a specific and modest sense. By an elementary proof, we mean one that relies only on standard tools from analysis at the undergraduate level, such as power series expansions, partial fraction decomposition, and classical identities for special functions, and that avoids contour integration, residue calculus, or ad hoc auxiliary constructions whose motivation is not transparent to beginning readers.

\section{Multisection Identities.}
Multisection of series is a technique for extracting selected coefficients from a power series $f(z) = \sum_{n=0}^{\infty} f_n z^n$,
by exploiting symmetry. Averaging $f$ over appropriate transformations of the variable $z$ isolates specific arithmetic subsequences of the coefficients $f_n$. The simplest example comes from the involution $z\mapsto-z$. Indeed, averaging $f(z)$ with $f(-z)$ gives
\begin{equation}
\frac{1}{2}\left(f(z) + f(-z)\right) = \sum_{n=0}^{\infty} f_{2n} z^{2n},
\end{equation}
which retains only the even-indexed terms of the original series. Subtracting instead extracts the odd terms of $f(z)$, producing
\begin{equation}
\frac{1}{2}\left(f(z) - f(-z)\right) = \sum_{n=0}^{\infty} f_{2n+1} z^{2n+1}.
\end{equation}

This idea extends naturally to a more general formula that allows one to extract all terms with index belonging to the residue class $q \mod p$ with $p\ge1$ and $0\le q\le p-1$. Indeed, letting $\omega = e^{\imath \frac{2 \pi }{p}}$ be the principal $p$-th root of unity, one obtains the general multisection formula \cite[Ch. 4]{Riordan}
\begin{equation}
\label{general multisection}
\frac{1}{p}\sum_{k=0}^{p-1} \frac{1}{\omega^{kq}} f\left(z\omega^k\right) = \sum_{n=0}^{\infty} f_{pn+q} z^{pn+q}.
\end{equation}

Below are a few identities that are proved in Ramanujan's notebooks \cite{R2} and \cite{R4} using the calculus of 
complex
residues. In what follows, we show that several of these identities can be recovered using multisection instead. The advantage of this viewpoint is that, once an appropriate ``base function'' is identified, the identities follow directly from known series expansions.

\subsection{Ramanujan Volume IV Entry 13 p. 380.}
Our first example appears as Entry 13 in page 380 of Volume IV of Ramanujan's Notebooks \cite{R4} in the form
\begin{equation}
\label{Entry13}
\frac{\pi}{8z^{3}}\frac{\sinh\left(2\pi z\right)+\sin\left(2\pi z\right)}{\cosh\left(2\pi z\right)-\cos\left(2\pi z\right)}=\frac{1}{8z^{4}}+\sum_{n\ge1}\frac{1}{4z^{4}+n^{4}}.
\end{equation}
This identity is proved by B. Berndt by considering it as a partial fraction decomposition and computing the associated residues. Let us take another route and prove this result using the multisection technique applied to a simpler function with a well-known expansion. To this end, define the functions
\begin{equation}
g\left(z\right)=\frac{\pi}{8z^{3}}\frac{\sinh\left(2\pi z\right)+\sin\left(2\pi z\right)}{\cosh\left(2\pi z\right)-\cos\left(2\pi z\right)},\,\,f\left(z\right)=\pi z\coth \pi z.
\end{equation}
We aim to express $g$ in terms of evaluations of $f$ at suitably rotated arguments. Letting $\omega=e^{\imath\frac{\pi}{4}}$, we observe that
\begin{equation}
f\left(z\omega\right)+f\left(z\omega^{3}\right)=\pi z\sqrt{2}\frac{\sin\left(\pi z\sqrt{2}\right)+\sinh\left(\pi z\sqrt{2}\right)}{\cosh\left(\pi z\sqrt{2}\right)-\cos\left(\pi z\sqrt{2}\right)},
\end{equation}
which follows by evaluating the classical formula \cite[4.35.37]{NIST}
\begin{equation}
\coth\left(x+\imath y\right)=\frac{\sinh2x-\imath\sin2y}{\cosh2x-\cos2y}
\end{equation}
at $x=\pi z/\sqrt{2}$ and $y=\pi z/\sqrt{2}$. Indeed, with this choice of $x,y$, the imaginary parts cancel when $f(z\omega)$ and $f(z\omega^3)$ are summed.

At this point, the connection with $g(z)$ becomes apparent. The right-hand side has the same trigonometric-hyperbolic structure as $g(z)$, differing only by a scaling of the argument and a rational prefactor. It follows that
\begin{equation}
\label{g}
g\left(z\right)=\frac{1}{16z^{4}}\left[f\left( z\omega\sqrt{2}\right)+f\left( z\omega^{3}\sqrt{2}\right)\right].
\end{equation}
This is the multisection technique in action. The function $f$ has well understood series expansions, and so identifying $g$ as a sum of parts in a multisection  of $f$ will produce a series expansion for $g$. 

To complete the derivation, we expand $f$ using its Mittag-Leffler series expansion \cite[4.36.3]{NIST} 
\begin{equation}
\label{MittagLeffler}
\pi z\coth \pi z=1+\sum_{n\ge1}\frac{2z^{2}}{z^{2}+n^{2}}.
\end{equation}
Substituting in \eqref{g} produces, after some algebra, Ramanujan's identity \eqref{Entry13}. Different choices for the base function $f(z)$ and the root of unity $\omega$ produce similar identities. The reader is invited to check the identities in Table \ref{Table1} arising from this method, and come up with more of their own.

\begin{table}[h]
\begin{tabular}{|c|c|c|c|c|}
\hline 
 & $f\left(z\right)$  & $\omega$ & multisection & identity\tabularnewline
\hline 
\hline 
(a) & $\pi z\csc\pi z$ & $e^{\imath\frac{\pi}{6}}$ & $f\left(z\omega\right)+f\left(z\omega^{5}\right)$ & 
{$\begin{aligned}
&\pi z\frac{\sqrt{3}\cosh\left(\frac{\pi z}{2}\right)\sin\left(\frac{\sqrt{3}}{2}\pi z\right)+\cos\left(\frac{\sqrt{3}}{2}\pi z\right)\sinh\left(\frac{\pi z}{2}\right)}{\cosh\pi z-\cos\left(\sqrt{3}\pi z\right)}\\
&=1+z^{2}\sum_{n\ge1}\left(-1\right)^{n}\frac{2z^{2}-n^{2}}{z^{4}-n^{2}z^{2}+n^{4}}
\end{aligned}$} \tabularnewline
\hline 
(b) & $\pi z\csc\pi z$ & $e^{\imath\frac{\pi}{6}}$ & {$\begin{aligned}
    &f\left(z\omega\right)+f\left(z\omega^{5}\right)\\
    &+\imath\sqrt{3}\\
    &\left(f\left(z\omega\right)-f\left(z\omega^{5}\right)\right)
\end{aligned}$}

& {$\begin{aligned}
    &4\pi z\frac{\cos\left(\frac{\sqrt{3}}{2}\pi z\right)\sinh\left(\frac{\pi z}{2}\right)}{\cosh\pi z-\cos\left(\sqrt{3}\pi z\right)}\\
    &=1+2z^{2}\sum_{n\ge1}\left(-1\right)^{n}\frac{z^{2}+n^{2}}{z^{4}-n^{2}z^{2}+n^{4}}
\end{aligned}$}
\tabularnewline
\hline 
(c) & $\pi^{2}\csc^{2}\pi z$ & $e^{\imath\frac{\pi}{6}}$ & $f\left(z\omega\right)+f\left(z\omega^{5}\right)$ & {$\begin{aligned}
    &4\pi^{2}\frac{1-\cos\left(\sqrt{3}\pi z\right)\cosh\left(\pi z\right)}{\left(\cosh\left(\pi z\right)-\cos\left(\sqrt{3}\pi z\right)\right)^{2}}\\
    &=\sum_{n\in\mathbb{Z}}\frac{2n^{6}-n^{4}z^{2}-5n^{2}z^{4}+z^{6}}{\left(z^{4}-n^{2}z^{2}+n^{4}\right)^{2}}
\end{aligned}$}
\tabularnewline
\hline 
(d) & $\pi z\coth\pi z$ & $e^{\imath\frac{2\pi}{3}}$ & $f\left(z\omega\right)+f\left(z\omega^{2}\right)$ & {$\begin{aligned}
    &\frac{\pi z}{2}\frac{\sqrt{3}\sin\left(\pi z\sqrt{3}\right)+\sinh\left(\pi z\right)}{\cosh\left(\pi z\right)-\cos\left(\pi z\sqrt{3}\right)}\\
    &=1+z^{2}\sum_{n\ge1}\frac{2z^{2}-n^{2}}{z^{4}-n^{2}z^{2}+n^{4}}
\end{aligned}$}
\tabularnewline
\hline 
(e) & $\pi z\coth\pi z$ & $e^{\imath\frac{2\pi}{3}}$ & $f\left(z\omega\right)-f\left(z\omega^{2}\right)$ & {$\begin{aligned}
    &\frac{\pi}{2z\sqrt{3}}\frac{\sinh\left(\pi z\sqrt{3}\right)-\sqrt{3}\sin\left(\pi z\right)}{\cosh\left(\pi z\sqrt{3}\right)-\cos\left(\pi z\right)}\\
    &=\sum_{n\ge0}\frac{n^{2}}{z^{4}+n^{2}z^{2}+n^{4}}
\end{aligned}$}
\tabularnewline
\hline 
(f) & $\pi z\coth\pi z$ & $e^{\imath\frac{\pi}{4}}$ & $f\left(z\omega\right)-f\left(z\omega^{3}\right)$ & {$\begin{aligned}
    &\frac{\pi}{4z}\frac{\sinh\left(2\pi z\right)-\sin\left(2\pi z\right)}{\cosh\left(2\pi z\right)-\cos\left(2\pi z\right)}\\
    &=\sum_{n\ge1}\frac{n^{2}}{4z^{4}+n^{4}}
\end{aligned}$}
\tabularnewline
\hline 
\end{tabular}
\caption{Generalizations of identity \eqref{Entry13}. Identities (e) and (f) can be found respectively as Entry 4 page 248 in Volume II and Entry 14 page 380 in Volume IV of Ramanujan's Notebooks.}
\label{Table1}
\end{table}

\subsection{A truly beautiful partial fraction expansion.}
Berndt describes the following identity (Entry 4 on page 360 in Notebook IV \cite{R4}) as ``a truly beautiful partial fraction expansion'':
\begin{align}
\label{truly}
\frac{\pi}{\sqrt{3}z^{2}}\frac{\sin\left(\pi z\right)\sin\left(\pi z\sqrt{3}\right)+\sinh\left(\pi z\right)\sinh\left(\pi z\sqrt{3}\right)}{\left(\cos\left(\pi z\sqrt{3}\right)-\cosh\left(\pi z\right)\right)\left(\cos\left(\pi z\right)-\cosh\left(\pi z\sqrt{3}\right)\right)}&\nonumber\\    
=\frac{1}{2\pi z^{4}}+\sum_{n\ge1}\frac{n\coth\left(n\pi\right)}{z^{4}+n^{2}z^{2}+n^{4}}+\frac{n\coth\left(n\pi\right)}{z^{4}-n^{2}z^{2}+n^{4}}.
\end{align}
Although more elaborate than \eqref{Entry13}, this identity can still be understood through multisection. The essential observation is that the rational terms on the right-hand side admit power-series expansion in $z$ with coefficients involving sums of the form $\sum_{n\ge1}\frac{\coth(n\pi)}{n^m}$. Once these sums are evaluated, the remaining steps reduce to recognizing a multisection of an explicit generating function.

We first use a result due to Nanjundiah \cite{Nanjundiah}:
\begin{equation}
\label{Nanjundiah}
\sum_{n\ge1}\frac{\coth\left(n\pi\right)}{n^{4p-1}}=\frac{1}{\pi}\sum_{k=0}^{2p}\left(-1\right)^{k-1}\zeta\left(2k\right)\zeta\left(4p-2k\right),\,\,p\ge1
\end{equation}
with $\zeta(n)$ the Riemann zeta function.  We may rewrite this identity using Euler's identity
$
\zeta\left(2n\right)=\left(-1\right)^{n-1}\frac{\left(2\pi\right)^{2n}}{2\left(2n\right)!}B_{2n},
$
as
\[
\sum_{n\ge1}\frac{\coth\left(n\pi\right)}{n^{4p-1}}=-\frac{\left(2\pi\right)^{4p}}{4\pi\left(4p\right)!}\tilde{B}_{4p},
\]
with $\tilde{B}_{n}$ the \textit{Ramanujan polynomials} defined by the convolution
$
\tilde{B}_{n}
=\sum_{k=0}^n \binom{n}{k}\imath^k B_k B_{n-k}
$
of the Bernoulli numbers $B_n$. 

On the other hand, a  partial fraction decomposition shows that
\[\frac{1}{2}\frac{n}{z^{4}+n^{2}z^{2}+n^{4}}+\frac{1}{2}\frac{n}{z^{4}-n^{2}z^{2}+n^{4}}
=\sum_{p\ge0}\frac{z^{12p}}{n^{12p+3}}-\sum_{p\ge0}\frac{z^{12p+8}}{n^{12p+11}},
\]
from which we deduce
\[
\frac{1}{2}\sum_{n\ge1}\frac{n\coth\left(n\pi\right)}{z^{4}+n^{2}z^{2}+n^{4}}+\frac{n\coth\left(n\pi\right)}{z^{4}-n^{2}z^{2}+n^{4}}
=\sum_{p\ge0}\left(\sum_{n\ge1}\frac{\coth\left(n\pi\right)}{n^{12p+3}}\right)z^{12p}-\sum_{p\ge0}\left(\sum_{n\ge1}\frac{\coth\left(n\pi\right)}{n^{12p+11}}\right)z^{12p+8}.\]
Replacing each inner sum with its Ramanujan polynomial expression and multiplying by a factor of $2$ produces
\[
-\frac{1}{4\pi z^{4}}\left[\sum_{p\ge0}\frac{\tilde{B}_{12p+4}}{\left(12p+4\right)!}\left(2\pi z\right)^{12p+4}-\sum_{p\ge0}\frac{\tilde{B}_{12p+12}}{\left(12p+12\right)!}\left(2\pi z\right)^{12p+12}\right].
\]
Up to the $2\pi$  scaling factor, we recognize the difference between two $12$-multisections of the Ramanujan polynomial generating function
\[
f\left(z\right)=\sum_{n\ge0}\frac{\tilde{B}_{n}}{n!}z^{n}=\left(\frac{z}{e^{z}-1}\right)\left(\frac{\imath z}{e^{\imath z}-1}\right).
\]
Denoting $\omega=e^{\imath\frac{\pi}{6}}$  and using \eqref{general multisection}, the multisections are computed as
\[
\sum_{p\ge0}\frac{\tilde{B}_{12p+4}}{\left(12p+4\right)!}z^{12p+4}=\frac{1}{12}\sum_{k=0}^{11}\frac{1}{\omega^{4k}}f\left(z\omega^{k}\right),\,\,\sum_{p\ge0}\frac{\tilde{B}_{12p+12}}{\left(12p+12\right)!}z^{12p+12}=\frac{1}{12}\sum_{k=0}^{11}f\left(z\omega^{k}\right)-1,
\]
and their difference is computed, after some algebra, as
\[
1-\frac{2\sqrt{3}}{12}z^{2}\frac{\sin\left(\frac{z}{2}\right)\sin\left(\frac{\sqrt{3}}{2}z\right)+\sinh\left(\frac{z}{2}\right)\sinh\left(\frac{\sqrt{3}}{2}z\right)}{\left(\cos\left(\frac{\sqrt{3}}{2}z\right)-\cosh\left(\frac{z}{2}\right)\right)\left(\cos\left(\frac{z}{2}\right)-\cosh\left(\frac{\sqrt{3}}{2}z\right)\right)}.
\]
Finally, replacing $z$ with $2\pi z$ and simplifying produces the desired result. While the algebra here was more involved than in the previous example, the underlying mechanism is the same.

\subsection{Final remark.}
It seems reasonable to assume that Ramanujan had the multisection technique in mind when he produced the previous identities. A reason why he was interested in them remains elusive, but we suggest here one potential reason. Rather than using the Mittag-Leffler series expansion of the base function $f(z),$ let us consider its Taylor series expansion.

For example, the base functions $f_1(z)=\pi z \cot \pi z$ and $f_2(x)=\pi z \csc \pi z$
have Taylor expansions
\[
f_1(z) = 1-2\sum_{k\ge1}\zeta\left(2k\right)z^{2k},\,\,
f_2(z) = \sum_{k\ge 0}\left(2-2^{2-2k}\right) \zeta (2k) z^{2k}.
\]
Ramanujan's identity \eqref{Entry13} and identities (f), (e) and (d) in Table \ref{Table1}  read equivalently
\[
\frac{\pi}{8z^{3}}\frac{\sinh\left(2\pi z\right)+\sin\left(2\pi z\right)}{\cosh\left(2\pi z\right)-\cos\left(2\pi z\right)}-\frac{1}{8z^{4}}=
\sum_{n \ge 1}(-1)^n 2^{2n} \zeta(4n) z^{4n},
\]
\[
\frac{\pi}{4z}\frac{\sinh\left(2\pi z\right)-\sin\left(2\pi z\right)}{\cosh\left(2\pi z\right)-\cos\left(2\pi z\right)} = \sum_{n \ge 0}(-1)^n 2^{2n}\zeta(4n+2) z^{4n},
\]
\[
\frac{\pi}{4z}\frac{\sinh\left(\pi z\sqrt{3}\right)-\sqrt{3}\sin\left(\pi z\right)}{\cosh\left(\pi z\sqrt{3}\right)-\cos\left(\pi z\right)}
=\sum_{n \ge 0}\cos(\frac{\pi}{6}+n\frac{2\pi}{3})\zeta(2n+2)z^{2n},
\] 
\begin{align}
\nonumber
\pi z\frac{\sqrt{3}\sin\left(\pi z\sqrt{3}\right)+\sinh\left(\pi z\right)}{\cosh\left(\pi z\right)-\cos\left(\pi z\sqrt{3}\right)}&
=-4\sum_{n\ge0}\cos\left(\frac{n\pi}{3}\right)\zeta\left(2n\right)z^{2n}
\end{align} 
with the convention $\zeta(0)=-\frac{1}{2}.$
Even the more  involved identity \eqref{truly} can be expressed as
\begin{align*}
\frac{\pi}{\sqrt{3}z^{2}}\frac{\sin\left(\pi z\right)\sin\left(\pi z\sqrt{3}\right)+\sinh\left(\pi z\right)\sinh\left(\pi z\sqrt{3}\right)}{\left(\cos\left(\pi z\sqrt{3}\right)-\cosh\left(\pi z\right)\right)\left(\cos\left(\pi z\right)-\cosh\left(\pi z\sqrt{3}\right)\right)}-\frac{1}{2\pi z^{4}}&\\
=2\sum_{p\ge0}z^{12p}\tilde{\zeta}(12p+4)-2\sum_{p\ge0}z^{12p+8}\tilde{\zeta}(12p+12)
\end{align*}
with $\tilde{\zeta}(4p)=\sum_{n\ge 1}\frac{\coth{n\pi}}{n^{4p-1}}$ the ``double  zeta'' series  given by Nanjundiah's identity
 \eqref{Nanjundiah}. These examples suggest that, \textit{in fine}, Ramanujan was looking for explicit expressions for generating  functions of the Riemann zeta function.

\section{Partial Fraction Decomposition and Telescoping.}
\subsection{Ramanujan's original identity.}
Partial fraction decomposition and telescoping are two of Ramanujan's favorite computational techniques. Let us illustrate this by considering Entry 11(iii) on page 217 of Ramanujan's  notebook II \cite{R2}, which reads
\begin{equation}\label{eq:desired-sum}
\log2\sum_{k\ge2}\frac{\left(-1\right)^{k}}{k\log k}+\log^{2}2\sum_{k\ge2}\frac{1}{k\log k\log\left(2k\right)}=1.
\end{equation}
Berndt produces a remarkable proof of this identity by considering the variant
\begin{equation}\label{eq:sn-version}
S_{n}=\sum_{k=1}^{n}\frac{1}{k\left(k+1\right)}+\sum_{k\ge2}\frac{\left(-1\right)^{k}\log2}{k\log\left(2^{n}k\right)}+\sum_{k\ge2}\frac{\log^{2}2}{k\log\left(2^{n}k\right)\log\left(2^{n+1}k\right)}
\end{equation}
and showing by induction on $n$ that $S_{n}$ coincides with the left-hand side of  \eqref{eq:desired-sum} for any value of $n$. Letting $n\to\infty$ then produces
\begin{equation}
S_{\infty}=\sum_{k\ge1}\frac{1}{k\left(k+1\right)}=1,
\end{equation}
which proves the identity. We propose here another proof that does not rely on the intuition behind Berndt's  proof, hoping that it will generate multiple generalizations of this result.

The key observation is the identity
\begin{equation}
\label{pfd log}
\frac{1}{\log k\log\left(2k\right)}=\frac{1}{\log2}\left(\frac{1}{\log k}-\frac{1}{\log\left(2k\right)}\right),
\end{equation}
which plays the same role here that a partial fraction decomposition plays for rational functions.
Ideally, we would proceed as follows: as a consequence of \eqref{pfd log}, we have
\begin{equation}
\log2\sum_{k\ge2}\frac{\left(-1\right)^{k}}{k\log k}+\log^{2}2\sum_{k\ge2}\frac{1}{k\log k\log\left(2k\right)}
=\log2\left(\sum_{k\ge2}\frac{\left(-1\right)^{k}}{k\log k}+\sum_{k\ge2}\frac{1}{k\log k}-\frac{1}{k\log\left(2k\right)}\right).
\end{equation}
Now adding like terms in the right-hand side produces
\begin{align}
\sum_{k\ge2}\frac{\left(-1\right)^{k}+1}{k\log k}-\frac{1}{k\log\left(2k\right)}&=\sum_{k\ge1}\frac{2}{2k\log\left(2k\right)}-\sum_{k\ge2}\frac{1}{k\log\left(2k\right)}
=\sum_{k\ge1}\frac{1}{k\log\left(2k\right)}-\sum_{k\ge2}\frac{1}{k\log\left(2k\right)},
\end{align}
which, by telescoping, produces the desired result $\frac{1}{\log2}.$ The key point is that the identity \eqref{pfd log} converts the problem into a telescoping sum. Once this observation is made, the argument becomes entirely transparent; all terms cancel except for a single boundary contribution. This mechanism is standard in elementary
real analysis and contrasts with Berndt’s original approach, where the guiding insight behind the auxiliary expression is less immediate.

Unfortunately, the series $\sum_{k\ge2}\frac{1}{k\log(k)}$ does not converge, and so we have to proceed with a little more caution. Instead of working with the infinite series above, we consider the partial sums, do the above manipulations, and show that the remaining tail of the sum converges to $0$. Indeed, looking at the truncated sum over the interval $\left[2,M\right]$, $M=2N$ being assumed even without loss of generality, we have 
\begin{align}
\nonumber
&\log2\left(\sum_{k=2}^{2N}\frac{\left(-1\right)^{k}}{k\log k}+\sum_{k=2}^{2N}\left(\frac{1}{k\log k}-\frac{1}{k\log\left(2k\right)}\right)\right)=\log2\left(\sum_{k=2}^{2N}\frac{\left(-1\right)^{k}+1}{k\log k}-\sum_{k=2}^{2N}\frac{1}{k\log\left(2k\right)}\right)\\
&=\log2\left(\sum_{k=1}^N\frac{1}{k\log(2k)}-\sum_{k=2}^{2N}\frac{1}{k\log(2k)}\right)
=\log2\left(\frac{1}{\log2}-\sum_{k=N+1}^{2N}\frac{1}{k\log(2k)}\right).
\end{align}
Since $k\log(2k)$ is a strictly increasing function of $k$, the remaining sum is bounded by $\frac{N}{\left(N+1\right)\log(2+2N)}$, which 
converges to $0$ as $N\to \infty$. 

In the generalizations that follow, we will make the formal manipulations of infinite series as we did above, ignoring the convergence issue,  with the understanding that they can be justified by bounding the sequence of partial sums.

\subsection{A first generalization.}

Looking back at the previous proof, it appears that the only property of $\log(k)$ that we have used is its complete additivity. This suggests that Ramanujan's identity can be extended to completely additive functions as we now show.

\begin{definition}
A function defined on the positive integers is called \textit{completely additive} if it satisfies $f(mn)=f(m)+f(n)$ for all positive integers $m,n$.     
\end{definition}

\begin{proposition}\label{prop:completely-add}
    Let $f$ be a completely additive function such that $f(k)> c\sqrt{\log(k)}$ for all $k\ge2$ and some constant $c$. Then
    \begin{equation}\label{eq:identity}
        f(2)\sum_{k\ge2}\frac{(-1)^k}{kf(k)}+(f(2))^2\sum_{k\ge2}\frac{1}{kf(k)f(2k)}=1.
    \end{equation}
\end{proposition}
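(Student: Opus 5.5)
The plan is to copy the telescoping argument given above for $\log$, using only the complete additivity of $f$. Complete additivity gives $f(1)=f(1\cdot 1)=2f(1)$, hence $f(1)=0$, and $f(2k)=f(2)+f(k)$ for every $k$. I read the hypothesis with $c>0$, so that $f(k)>0$ for all $k\ge2$; in particular $f(2)>0$ and $f(2k)>0$ for all $k\ge1$. This gives the analogue of \eqref{pfd log}, valid for $k\ge2$:
\[
\frac{1}{f(k)f(2k)}=\frac{1}{f(2)}\left(\frac{1}{f(k)}-\frac{1}{f(2k)}\right).
\]

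As in the $\log$ case, $\sum 1/(kf(k))$ need not converge, so I work with truncations at $M=2N$. Writing $(f(2))^2/(kf(k)f(2k))=f(2)\bigl(1/(kf(k))-1/(kf(2k))\bigr)$, the truncated left-hand side becomes
\[
f(2)\left(\sum_{k=2}^{2N}\frac{(-1)^k+1}{kf(k)}-\sum_{k=2}^{2N}\frac{1}{kf(2k)}\right).
\]
Only even indices survive in the first sum. Setting $k=2j$, it equals $\sum_{j=1}^{N}\frac{2}{2j\,f(2j)}=\sum_{j=1}^{N}\frac{1}{jf(2j)}$; here $j=1$ is allowed because $f(2)\neq0$. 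Subtracting $\sum_{k=2}^{2N}\frac{1}{kf(2k)}$ leaves the $j=1$ boundary term $1/f(2)$ minus the tail. The truncation therefore equals
\[
1-f(2)\sum_{k=N+1}^{2N}\frac{1}{kf(2k)}.
\]

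Next I bound the tail. For $k\ge N+1$ the hypothesis gives $f(2k)>c\sqrt{\log(2k)}\ge c\sqrt{\log(2N+2)}$, so
\[
0\le\sum_{k=N+1}^{2N}\frac{1}{kf(2k)}\le\frac{N}{N+1}\cdot\frac{1}{c\sqrt{\log(2N+2)}}\longrightarrow0 .
\]
Unlike the $\log$ case, I cannot use monotonicity of $kf(2k)$, since $f$ need not be monotone; the uniform lower bound replaces it. Odd truncations $M=2N+1$ differ from $M=2N$ by a single summand. That summand is $O\bigl(1/(M\sqrt{\log M})\bigr)$ from the $(-1)^k$ part plus $O(1/(M\log M))$ from the product part, so it also tends to $0$.

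The main obstacle is interpreting the left-hand side of \eqref{eq:identity}. The hypothesis only gives $\frac{1}{kf(k)f(2k)}<\frac{1}{c^2k\sqrt{\log k\,\log 2k}}$, which is of order $1/(k\log k)$ and not summable. Moreover, $1/(kf(k))$ need not be monotone, so Leibniz's test does not apply to the alternating series either. I would therefore state and prove the identity for the combined series $\sum_{k\ge2}a_k$, where $a_k=f(2)\frac{(-1)^k}{kf(k)}+\frac{(f(2))^2}{kf(k)f(2k)}$; the argument above shows that its partial sums converge to $1$, in the same spirit as the paper's convention on formal manipulations. If the two series are known separately to converge, for instance when $f=\log$ or under an extra growth or monotonicity assumption, the identity holds in its literal split form.
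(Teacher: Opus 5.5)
Your proof is correct and follows the paper's route: the identity $\frac{1}{f(k)f(2k)}=\frac{1}{f(2)}\bigl(\frac{1}{f(k)}-\frac{1}{f(2k)}\bigr)$, the collapse onto even indices, and telescoping down to the boundary term $1/f(2)$. The one difference is that you work with truncations at $2N$ and bound the tail using the uniform estimate $f(2k)>c\sqrt{\log(2k)}$, rather than manipulating the infinite series formally.

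Your reservation about the split form is justified and is more than a technicality. The paper asserts that the second series converges ``by the growth assumption'', but this is false in general. Take the completely additive $f$ with $f(p)=\sqrt{\log p}$ on primes. It satisfies $f(k)\ge\sqrt{\log k}$, yet by Mertens and Markov a positive proportion of each dyadic block has $f(k)f(2k)\ll\log k$. Hence $\sum_{k\ge2}1/(kf(k)f(2k))=+\infty$, which forces the first series to $-\infty$. So under the stated hypothesis the identity only makes sense for the combined series, as you state it. The split form becomes legitimate under a stronger bound such as $f(k)>c\log k$, which covers $\log$ and $\sopfr$.
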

\begin{proof}
    Observe that
    \begin{equation}\label{eq:pfd1}
        \frac{1}{f(2)}\left(\frac{1}{f(k)}-\frac{1}{f(2k)}\right)=\frac{1}{f(2)}\frac{f(2k)-f(k)}{f(k)f(2k)}.
    \end{equation}
    Since $f(2k) = f(k)+f(2),$ it follows that \eqref{eq:pfd1} can be written
    \begin{equation}
        \frac{1}{f(2)}\left(\frac{1}{f(k)}-\frac{1}{f(2k)}\right)=\frac{1}{f(2)}\frac{f(2)}{f(k)f(2k)}=\frac{1}{f(k)f(2k)}.
    \end{equation}
    Applying this result to the second sum in \eqref{eq:identity}, which converges by the growth assumption, produces
    \begin{align}
    \nonumber
        &f(2)\sum_{k\ge2}\frac{(-1)^k}{kf(k)}+(f(2))^2\sum_{k\ge2}\frac{1}{kf(k)f(2k)}=f(2)\left[\sum_{k\ge2}\frac{(-1)^k}{kf(k)}+\sum_{k\ge2}\frac{1}{kf(k)}-\sum_{k\ge2}\frac{1}{kf(2k)}\right]\\
        \nonumber
        &=f(2)\left[\sum_{k\ge2}\frac{(-1)^k+1}{kf(k)}-\sum_{k\ge2}\frac{1}{kf(2k)}\right]=f(2)\left[\sum_{k\ge1}\frac{2}{2kf(2k)}-\sum_{k\ge2}\frac{1}{kf(2k)}\right]=f(2)\left[\frac{1}{f(2)}\right]=1.
    \end{align}
\end{proof}

There are a number of interesting completely additive functions aside from the logarithm, including the number $\Omega(k)$ of prime divisors of $k$, the sum $\sopfr(k)$ of the prime factors of $k$, both counting multiplicity, and the logarithmic arithmetic derivative $\ld(k)$ \cite{Zachary} defined by the complete additivity property and the assumption $\ld(p)=\frac{1}{p}$ for every prime $p$. Of these, the $\sopfr$ function satisfies the growth condition needed in Proposition~\ref{prop:completely-add}, which produces the identity
\begin{equation}
        \sum_{k\ge2}\frac{(-1)^k}{k\sopfr(k)}+2\sum_{k\ge2}\frac{1}{k\sopfr(k)(\sopfr(k)+2)}=\frac12.
\end{equation}

\subsection{Another generalization.}
The same mechanism produces families of identities in which the shifts by powers of $2$ are iterated, and we record one representative statement here.

\begin{proposition}
For an arbitrary integer $q \ge 1,$ the following identity holds:
\begin{equation}
\log2 \sum_{l=1}^q
\sum_{k\ge2}\frac{\left(-1\right)^{k}}{k
\log\left(k2^l\right)}
+q\log^2(2)\sum_{k\ge2}\frac{1}{k
\log\left(2k\right)
\log\left(k2^{q+1}\right)}=\sum_{l=1}^q \frac{1}{l+1}.
\end{equation}

\end{proposition}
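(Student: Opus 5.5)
The plan is to reuse the mechanism of the proof of Proposition~\ref{prop:completely-add}. The only change is that the single telescoping step becomes a telescoping in the shift parameter $l$. The key ``partial fraction'' identity, the analogue of \eqref{pfd log}, is
\[
\frac{1}{\log(2k)\log\left(2^{q+1}k\right)}=\frac{1}{q\log 2}\left(\frac{1}{\log(2k)}-\frac{1}{\log\left(2^{q+1}k\right)}\right).
\]
It holds because $\log(2^{q+1}k)-\log(2k)=q\log 2$. I then write the difference on the right as a telescoping sum over intermediate shifts:
\[
\frac{1}{\log(2k)}-\frac{1}{\log(2^{q+1}k)}=\sum_{l=1}^q\left(\frac{1}{\log(2^lk)}-\frac{1}{\log(2^{l+1}k)}\right).
\]
The factor $q$ cancels, and the second term of the left-hand side becomes
\[
\log 2\sum_{l=1}^q\sum_{k\ge2}\left(\frac{1}{k\log(2^lk)}-\frac{1}{k\log(2^{l+1}k)}\right).
\]

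Next I combine this with the first term, one value of $l$ at a time. For fixed $l$, the terms $\frac{(-1)^k}{k\log(2^lk)}+\frac{1}{k\log(2^lk)}$ vanish for odd $k$. For even $k=2m$ with $m\ge1$ they equal
\[
\frac{2}{2m\log(2^{l+1}m)}=\frac{1}{m\log(2^{l+1}m)}.
\]
So the $l$-th block reduces to
\[
\log2\left(\sum_{m\ge1}\frac{1}{m\log(2^{l+1}m)}-\sum_{k\ge2}\frac{1}{k\log(2^{l+1}k)}\right).
\]
This telescopes to its $m=1$ term, giving $\log 2\cdot\frac{1}{\log 2^{l+1}}=\frac{1}{l+1}$. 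Summing over $l=1,\dots,q$ gives $\sum_{l=1}^q\frac{1}{l+1}$, as claimed.

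The main obstacle is justifying these steps, since the series $\sum 1/(k\log(2^lk))$ diverge individually. As in the proof of Ramanujan's original identity, I would work with partial sums truncated at $k\le 2N$. The exchange with the finite sum over $l$ is harmless, and every rearrangement above is then a finite one. For each $l$, the even-index collapse leaves the exact expression
\[
\frac{1}{(l+1)\log2}-\sum_{k=N+1}^{2N}\frac{1}{k\log(2^{l+1}k)}.
\]
Since $k\log(2^{l+1}k)$ is increasing in $k$, this tail is at most $\frac{N}{(N+1)\log(2^{l+1}(N+1))}$, which tends to $0$.

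Finally, the left-hand side of the identity is the limit of its partial sums. The alternating series $\sum_k(-1)^k/(k\log(2^lk))$ converges by the alternating series test. The second series converges absolutely, since its terms are $O(1/(k\log^2k))$. Letting $N\to\infty$ then completes the argument.
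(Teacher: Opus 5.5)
Your proof is correct and is essentially the paper's argument. It uses the same partial fraction identity $\frac{q\log 2}{\log(2k)\log(2^{q+1}k)}=\frac{1}{\log(2k)}-\frac{1}{\log(2^{q+1}k)}$ followed by the even-index collapse and telescoping; your insertion of the intermediate terms $\pm\frac{1}{k\log(2^{l}k)}$, which splits the left-hand side into $q$ blocks each equal to the shifted identity $\sum_{k\ge2}\bigl(\frac{(-1)^k}{k\log(k2^l)}+\frac{\log2}{k\log(k2^l)\log(k2^{l+1})}\bigr)=\frac{1}{(l+1)\log2}$ noted after the paper's proof, is the same cascade the paper carries out for $q=1,2$, and with your truncation at $2N$ it supplies the general-$q$ case and the convergence justification that the paper omits.
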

\begin{proof}
For $q=1,$ the partial fraction decomposition 
\begin{equation}
\nonumber
\frac{\log\left(2\right)}{\log\left(2k\right)\log\left(4k\right)}=\frac{1}{\log\left(2k\right)}-\frac{1}{\log\left(4k\right)}
\end{equation}
produces
\begin{equation}
\nonumber
\sum_{k\ge2}\frac{\left(-1\right)^{k}}{k\log\left(2k\right)}+\log2\sum_{k\ge2}\frac{1}{k\log\left(2k\right)\log\left(4k\right)}
=\sum_{k\ge2}\frac{\left(-1\right)^{k}}{k\log\left(2k\right)}+\sum_{k\ge2}\frac{1}{k\log\left(2k\right)}-
\frac{1}{k\log\left(4k\right)}.
\end{equation}
Combining the first two series produces the series $\sum_{k\ge1}
\frac{1}{k\log\left(4k\right)}$ that telescopes with the third series to  produce $\frac{1}{2\log(2)}$ so that
\begin{equation}
\nonumber
\log2\sum_{k\ge2}\frac{\left(-1\right)^{k}}{k\log\left(2k\right)}+\log^{2}2\sum_{k\ge2}\frac{1}{k\log\left(2k\right)\log\left(4k\right)}=\frac{1}{2}.
\end{equation}
Similarly, in the case $q=2,$ the partial fraction decomposition 
\begin{equation}
\nonumber
\frac{\log\left(4\right)}{\log\left(2k\right)\log\left(8k\right)}=\frac{1}{\log\left(2k\right)}-\frac{1}{\log\left(8k\right)}
\end{equation}
produces
\begin{align}
\sum_{k\ge2}&\frac{\left(-1\right)^{k}}{k\log\left(2k\right)}+\sum_{k\ge2}\frac{\left(-1\right)^{k}}{k\log\left(4k\right)}+\log4\sum_{k\ge2}\frac{1}{k\log\left(2k\right)\log\left(8k\right)}\nonumber
\\
\nonumber
&=\sum_{k\ge2}\frac{\left(-1\right)^{k}}{k\log\left(2k\right)}+\sum_{k\ge2}\frac{\left(-1\right)^{k}}{k\log\left(4k\right)}+\sum_{k\ge2}\frac{1}{k\log\left(2k\right)}-\sum_{k\ge2}\frac{1}{k\log\left(8k\right)}.
\end{align}
The first and third series combine into $\sum_{k\ge1}\frac{1}{k\log\left(4k\right)}$ which in turn combines with the second series to produce $\frac{1}{\log 4}+\sum_{k\ge1}\frac{1}{k\log\left(8k\right)}.$ This series telescopes with the last one, leaving constant terms
$\frac{\log 2}{\log 4}+\frac{\log 2}{\log 8}=\frac{1}{2}+\frac{1}{3}$ as desired.
The proof of the general case follows the same pattern and is omitted.
\end{proof}
Note that the same arguments produce the identity
\begin{equation}
\sum_{k\ge2}\frac{(-1)^k}{k \log(k2^n)} +\frac{\log2}{k\log(k2^n)\log(k2^{n+1})}=\frac{1}{(n+1)\log2},
\end{equation}
explaining why the sequence $S_n$ used by B. Berndt is constant.
\subsection{Yet another generalization.}

Observe that in our proof of identity \eqref{eq:desired-sum}, any term $\log k$ can be replaced by $\log k +z$ with an arbitrary real number $z>-\log2.$ Hence, Ramanujan's identity extends to a
parameterized deformation as follows:
\begin{proposition}
For any $z>-\log2,$
\begin{equation}
\sum_{k\ge2}\frac{\left(-1\right)^{k}}{k\left(\log k+z\right)}+\sum_{k\ge2}\frac{\log2}{k\left(\log k+z\right)\left(\log\left(2k\right)+z\right)}=\frac{1}{\log2+z}.
\end{equation}
Choosing $\alpha>\frac{1}{2}$ and $z=\log\alpha$ produces the generalization
\begin{equation}
\sum_{k\ge2}\frac{\left(-1\right)^{k}}{k\log\left(\alpha k\right)}+\sum_{k\ge2}\frac{\log2}{k\log\left(\alpha k\right)\log\left(2\alpha k\right)}=\frac{1}{\log2\alpha}.
\end{equation}
The special case $\alpha=1$ is Ramanujan's identity \eqref{eq:desired-sum}.
\end{proposition}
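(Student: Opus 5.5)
Set $L(k)=\log k+z$. The proof follows the telescoping argument used for \eqref{eq:desired-sum}. The only structural fact it needs is the shifted additivity $L(2k)=L(k)+\log 2$. This holds because the shift $z$ is carried only once. It gives the partial-fraction identity
\begin{equation}
\nonumber
\frac{\log 2}{L(k)L(2k)}=\frac{1}{L(k)}-\frac{1}{L(2k)},
\end{equation}
which is the analogue of \eqref{pfd log}. The hypothesis $z>-\log2$ ensures $L(k)>0$ for all $k\ge1$. In particular $L(1)=z$ may vanish or be negative, but $L(2\cdot1)=\log 2+z>0$, and $L(1)$ never actually appears. So no denominator vanishes, and $L(k)\sim\log k$ controls the tails.

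Because $\sum 1/(kL(k))$ diverges, I will work with truncations at $M=2N$, as in the careful version of the original argument. Applying the decomposition termwise, the left side truncated to $2\le k\le 2N$ equals
\begin{equation}
\nonumber
\sum_{k=2}^{2N}\frac{(-1)^k+1}{kL(k)}-\sum_{k=2}^{2N}\frac{1}{kL(2k)}.
\end{equation}
In the first sum only even $k=2j$ survive, each with weight $2/(2j\,L(2j))=1/(jL(2j))$. That sum therefore becomes $\sum_{j=1}^{N}\frac{1}{jL(2j)}$. Subtracting the second sum leaves the $j=1$ term $\frac{1}{L(2)}=\frac{1}{\log2+z}$, minus the tail $\sum_{k=N+1}^{2N}\frac{1}{kL(2k)}$.

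The remaining step is to show this tail tends to $0$. Since $k\mapsto kL(2k)$ is positive and increasing for $k\ge1$, the tail is at most $\frac{N}{(N+1)(\log(2N+2)+z)}\to0$. Two points also need checking: that the alternating series and the second series converge individually, so that limits along even $M$ give the full sums. The second series converges because its terms are $O(1/(k\log^2k))$. The alternating series converges by the Leibniz test, since $1/(kL(k))$ is eventually decreasing to $0$ (indeed for all $k\ge2$ once $L(2)>0$). Multiplying through by $\log 2$ is not needed here, since the statement already has the $\log2$ inside the second sum.

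Finally, the second display follows by substituting $z=\log\alpha$. The condition $\alpha>\frac12$ is exactly $z>-\log 2$. Then $\log k+z=\log(\alpha k)$, $\log(2k)+z=\log(2\alpha k)$, and $\log 2+z=\log(2\alpha)$. Setting $\alpha=1$ recovers \eqref{eq:desired-sum} after multiplying by $\log 2$. The only genuinely delicate point is the tail estimate together with positivity of $L$ near small $k$, and both are handled by the hypothesis on $z$.
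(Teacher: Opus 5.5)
Your proof is correct and follows the paper's route: the paper simply observes that $\log k$ may be replaced by $\log k+z$ throughout the truncated telescoping proof of \eqref{eq:desired-sum}, which is exactly what you carry out via $L(2k)=L(k)+\log 2$, the truncation at $M=2N$, and the tail bound, and your check that both series converge separately is a welcome extra. One small slip: $z>-\log 2$ gives $L(k)>0$ only for $k\ge2$, not for all $k\ge1$, as your next sentence in effect concedes, and $k\ge2$ is all the argument uses.
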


As in the previous results, this general identity holds for arbitrary completely additive functions satisfying a growth condition.
\begin{proposition}
    Let $\alpha\in\mathbb{Q}_{\ge1}$ and let $f$ be a completely additive function satisfying $f(x)>c\sqrt{\log(x)}$ for some constant $c$. Then
    \begin{equation}
        \sum_{k\ge2}\frac{(-1)^k}{kf(\alpha k)}+f(2)\sum_{k\ge2}\frac{1}{kf(\alpha k)f(2\alpha k)}=\frac{1}{f(2\alpha)}.
    \end{equation}
\end{proposition}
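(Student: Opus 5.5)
The plan is to repeat the telescoping argument used for Proposition~\ref{prop:completely-add}, after first extending $f$ to positive rationals. Write $\alpha=a/b$ with $a,b$ positive integers and set $f(a/b)=f(a)-f(b)$. Complete additivity makes this well defined: if $a/b=a'/b'$ then $ab'=a'b$, so $f(a)+f(b')=f(a')+f(b)$. The extension is again completely additive on $\mathbb{Q}_{>0}$. In particular, for every $k$,
\[
f(2\alpha k)=f(\alpha k)+f(2).
\]
I will read the growth hypothesis as $f(x)>c\sqrt{\log x}$ with $c>0$, for every rational $x$ of the form $\alpha k$ or $2\alpha k$ with $k\ge2$. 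Since $\alpha\ge1$, all such $x$ are at least $2$, so this forces $f(\alpha k)>0$ and $f(2\alpha k)>0$; this implicit interpretation is needed for the statement to make sense. In particular no denominator vanishes.

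The key step is the analogue of \eqref{pfd log}. Because $f(2\alpha k)-f(\alpha k)=f(2)$,
\[
\frac{f(2)}{f(\alpha k)f(2\alpha k)}=\frac{1}{f(\alpha k)}-\frac{1}{f(2\alpha k)}.
\]
I will not manipulate the divergent series directly. Instead I truncate at $M=2N$ and work with
\[
\sum_{k=2}^{2N}\frac{(-1)^k}{kf(\alpha k)}+\sum_{k=2}^{2N}\Bigl(\frac{1}{kf(\alpha k)}-\frac{1}{kf(2\alpha k)}\Bigr).
\]
The terms $\frac{(-1)^k+1}{kf(\alpha k)}$ vanish for odd $k$. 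For $k=2m$ they equal $\frac{1}{mf(2\alpha m)}$, where $m$ runs over $1,\dots,N$. Hence the truncated sum is
\[
\sum_{m=1}^{N}\frac{1}{mf(2\alpha m)}-\sum_{k=2}^{2N}\frac{1}{kf(2\alpha k)}=\frac{1}{f(2\alpha)}-\sum_{k=N+1}^{2N}\frac{1}{kf(2\alpha k)}.
\]

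The remaining work, which I expect to be the only real obstacle, is analytic. First, the tail must go to $0$. Using the growth bound,
\[
\sum_{k=N+1}^{2N}\frac{1}{kf(2\alpha k)}\le\frac{1}{c\sqrt{\log(2\alpha N)}}\sum_{k=N+1}^{2N}\frac1k\le\frac{\log 2+o(1)}{c\sqrt{\log N}}\longrightarrow0.
\]

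Second, the truncated sums must converge to the two series in the statement. The second series converges absolutely, since its terms are $O\bigl(1/(k\log k)\bigr)$ by the growth bound. The first series requires care: its terms alternate in sign but need not decrease in absolute value, because $f$ is not monotone (for example, $\sopfr$ is not). I would handle this by pairing consecutive terms $k=2m,2m+1$ and showing the pairs form a convergent series. Failing that, I would simply interpret the first series as the limit of its even partial sums, as the paper already does in the proof of \eqref{eq:desired-sum}. Along even partial sums, letting $N\to\infty$ gives
\[
\sum_{k\ge2}\frac{(-1)^k}{kf(\alpha k)}+f(2)\sum_{k\ge2}\frac{1}{kf(\alpha k)f(2\alpha k)}=\frac{1}{f(2\alpha)}.
\]
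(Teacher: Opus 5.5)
Your route is the paper's: extend $f$ to $\mathbb{Q}_{>0}$, use $\frac{f(2)}{f(\alpha k)f(2\alpha k)}=\frac{1}{f(\alpha k)}-\frac{1}{f(2\alpha k)}$, and telescope. Your truncation at $2N$, your tail estimate, and your explicit reading of the hypothesis ($c>0$, imposed on the rationals $\alpha k$ and $2\alpha k$) are all correct, and more careful than the paper.

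The gap is the sentence saying the second series converges absolutely because its terms are $O(1/(k\log k))$. The series $\sum_{k\ge2}1/(k\log k)$ diverges, so the bound $f(\alpha k)f(2\alpha k)>c^2\log(\alpha k)$, which is all that $\sqrt{\log}$ growth gives, proves nothing. This cannot be patched under the stated hypothesis. Take $\alpha=1$ and let $f$ be completely additive with $f(p)=\sqrt{\log p}$ on primes.
\begin{itemize}
\item Since $\sqrt a+\sqrt b\ge\sqrt{a+b}$, we have $f(k)\ge\sqrt{\log k}$, so the hypothesis holds.
\item Cauchy--Schwarz gives $f(k)\le\sqrt{\Omega(k)\log k}$, hence $f(k)f(2k)\le(\Omega(k)+1)\log(2k)$.
\item By Hardy--Ramanujan, $\Omega(k)\le2\log\log k$ for almost all $k$. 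So the terms $\frac{1}{kf(k)f(2k)}$ are $\gg\frac{1}{k\log k\log\log k}$ on a set of density one, and the second series diverges.
\end{itemize}
Your (correct) truncated identity then sends the even partial sums of the first series to $-\infty$. The paper's proof has the same defect where it asserts that the growth condition guarantees convergence of the sums.

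Under the stated hypothesis, what your argument genuinely proves is
\[
\lim_{N\to\infty}\sum_{k=2}^{2N}\Bigl(\frac{(-1)^k}{kf(\alpha k)}+\frac{f(2)}{kf(\alpha k)f(2\alpha k)}\Bigr)=\frac{1}{f(2\alpha)}.
\]
To split this into two separately convergent series you need a stronger hypothesis. For example, $f(x)\ge c(\log x)^{1/2+\varepsilon}$ works; it is met by $\log$, and by $\sopfr$ since $\sopfr(k)\ge e\log k$. The second series then converges by comparison with $\sum_k 1/\bigl(k(\log k)^{1+2\varepsilon}\bigr)$.

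Once that is in hand, your hedging about the first series is unnecessary. The truncated identity shows its even partial sums converge. Consecutive partial sums differ by $\frac{1}{(2N+1)f((2N+1)\alpha)}\to0$. So the first series converges in the ordinary sense, with no pairing argument and no reinterpretation via even partial sums.
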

\begin{proof}
    Since $f$ is completely additive, it can be extended to $\mathbb{Q}$ by $f(m/n)=f(m)-f(n)$ and we have
    \begin{equation}
    \nonumber
        \frac{1}{f(2)}\left(\frac{1}{f(\alpha k)}-\frac{1}{f(2\alpha k)}\right)=\frac{1}{f(2)}\frac{f(2\alpha k)-f(\alpha k)}{f(\alpha k)f(2\alpha k)}=\frac{1}{f(\alpha k)f(2\alpha k)}.
    \end{equation}
    Then
    \begin{align}
    \nonumber
        &\sum_{k\ge2}\frac{(-1)^k}{kf(\alpha k)}+f(2)\sum_{k\ge2}\frac{1}{kf(\alpha k)f(2\alpha k)}=
        \sum_{k\ge2}\frac{(-1)^k}{kf(\alpha k)}+\sum_{k\ge2}\frac{1}{k}\left(\frac{1}{f(\alpha k)}-\frac{1}{f(2\alpha k)}\right)\\
        \nonumber
        &=\sum_{k\ge2}\frac{(-1)^k}{kf(\alpha k)}+\sum_{k\ge2}\frac{1}{kf(\alpha k)}-\sum_{k\ge2}\frac{1}{kf(2\alpha k)}
        =\sum_{k\ge1}\frac{1}{kf(2\alpha k)}-\sum_{k\ge2}\frac{1}{kf(2\alpha k)}=\frac{1}{f(2\alpha)},
    \end{align}
    where the growth condition has been used to guarantee the convergence of the sums.
\end{proof}

\section{A trigonometric integral.}
\begin{quote}
    ``Of Ramanujan’s remarkable ability to evaluate elliptic and other definite integrals, Hardy has said that during the course of his lectures, if at any time he needed the value of a certain integral, he would simply turn towards Ramanujan in the audience, who would provide the answer instantly!'' \cite{Alladi}, p.117
\end{quote}
Entry 16(ii) in page 264 of Ramanujan's Notebook II  \cite{R2} reads: 
\begin{proposition}
For $n,p$ integers
\begin{equation}
\label{Inp}
I_{n,p}=\int_{0}^{\infty}\frac{\sin^{2n+1}x}{x}\cos\left(2px\right)dx
=\left(-1\right)^{p}\frac{\sqrt{\pi}}{2}\frac{\Gamma\left(n+1\right)\Gamma\left(n+\frac{1}{2}\right)}{\Gamma\left(n-p+1\right)\Gamma\left(n+p+1\right)}.
\end{equation}
Moreover,
\begin{equation}
\label{Inp2}
I_{n,p}=\int_{0}^{\infty}\frac{\sin^{2n+2}x}{x^{2}}\cos\left(2px\right)dx.
\end{equation}

\end{proposition}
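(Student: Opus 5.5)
The approach is to linearize the trigonometric powers, reduce everything to the Dirichlet integral $\int_0^\infty \frac{\sin(ax)}{x}dx=\frac{\pi}{2}\operatorname{sgn}(a)$, and recognize the resulting finite binomial sum as a ratio of Gamma functions.

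\textbf{Step 1: expand the power.} First I would write
\[
\sin^{2n+1}x=\frac{1}{4^{n}}\sum_{j=0}^{n}(-1)^{j}\binom{2n+1}{n-j}\sin\bigl((2j+1)x\bigr),
\]
which follows from expanding $\bigl((e^{\imath x}-e^{-\imath x})/2\imath\bigr)^{2n+1}$ binomially and pairing conjugate terms.

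\textbf{Step 2: integrate term by term.} Multiplying by $\cos(2px)$ and using
\[
\sin\bigl((2j+1)x\bigr)\cos(2px)=\tfrac12\bigl[\sin((2j+1+2p)x)+\sin((2j+1-2p)x)\bigr],
\]
each term becomes a Dirichlet integral. Hence
\[
I_{n,p}=\frac{\pi}{4^{n+1}}\sum_{j=0}^{n}(-1)^{j}\binom{2n+1}{n-j}\Bigl[\operatorname{sgn}(2j+1+2p)+\operatorname{sgn}(2j+1-2p)\Bigr].
\]
Since $I_{n,p}$ is even in $p$, it is enough to take $p\ge0$. The bracket then equals $2$ when $j\ge p$ and $0$ when $j<p$, so
\[
I_{n,p}=\frac{\pi}{2\cdot4^{n}}\sum_{j=p}^{n}(-1)^{j}\binom{2n+1}{n-j}.
\]
If $p>n$ the sum is empty, which matches the right-hand side of \eqref{Inp} because $1/\Gamma(n-p+1)=0$.

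\textbf{Step 3: close the sum (the main obstacle).} Set $m=n-j$. The sum becomes
\[
(-1)^{n}\sum_{m=0}^{n-p}(-1)^{m}\binom{2n+1}{m}.
\]
This is a partial alternating sum of a binomial row, and the standard identity $\sum_{m=0}^{M}(-1)^m\binom{N}{m}=(-1)^M\binom{N-1}{M}$, proved by Pascal telescoping, evaluates it to $(-1)^{p}\binom{2n}{n-p}$. It then remains to check that
\[
\frac{\pi}{2\cdot 4^{n}}\binom{2n}{n-p}=\frac{\sqrt\pi}{2}\frac{\Gamma(n+1)\Gamma(n+\frac12)}{\Gamma(n-p+1)\Gamma(n+p+1)}.
\]
This follows from the duplication formula $\Gamma(n+\tfrac12)=\frac{(2n)!\sqrt\pi}{4^{n}n!}$. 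This is where the sign bookkeeping is most delicate.

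\textbf{Step 4: the second formula.} For \eqref{Inp2}, I would integrate by parts with $u=\sin^{2n+2}x\cos(2px)$ and $dv=dx/x^{2}$. The boundary terms vanish. The derivative of $u$ produces
\[
(2n+2)\sin^{2n+1}x\cos x\cos 2px-2p\,\sin^{2n+2}x\sin 2px,
\]
integrated against $1/x$. Product-to-sum formulas rewrite this through terms of the form $\sin^{2n+1}x\,\cos((2p\pm1)x)$. Rather than manipulating this directly, it is simpler to repeat Steps 1 to 3. Expand
\[
\sin^{2n+2}x=\frac{1}{4^{n+1}}\Bigl[\binom{2n+2}{n+1}+2\sum_{j\ge1}(-1)^{j}\binom{2n+2}{n+1-j}\cos(2jx)\Bigr],
\]
and multiply by $\cos 2px$. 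Then use
\[
\int_0^\infty\frac{\cos(ax)-\cos(bx)}{x^{2}}dx=\frac{\pi}{2}\bigl(|b|-|a|\bigr),
\]
which applies because the constant terms combine so that the integrand is integrable at $0$. This yields a finite sum of the form $\sum(-1)^{j}\binom{2n+2}{n+1-j}\,|j-p|$. Applying the same partial alternating-sum identity twice should reduce it to the same value $\frac{\pi}{2\cdot4^{n}}(-1)^{p}\binom{2n}{n-p}$, which proves \eqref{Inp2}.
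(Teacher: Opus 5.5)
Your proof is correct, but the collapse to a single binomial coefficient happens in a different place than in the paper. You linearize the whole power $\sin^{2n+1}x$ (resp.\ $\sin^{2n+2}x$). After the Dirichlet integral, a whole range $j\ge p$ of terms therefore survives, and you must collapse it by hand with $\sum_{m\le M}(-1)^m\binom{N}{m}=(-1)^M\binom{N-1}{M}$: once for the first integral, twice for the second.

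The paper needs no summation identity. In its Fourier proof it expands only $\sin^{2n}x$ and keeps the factor $\sin x/x$ (resp.\ $\sin^2x/x^2$) intact. The transform of that factor is an indicator (resp.\ triangle) function, so at integer $p$ exactly one index, $k=n-p$, survives in both integrals. This also makes the equality of the two integrals at integer $p$ immediate. In its Cauchy proof, writing $\cos(2px)\sin^{2n+1}x=\frac{(-1)^n}{2^{2n+1}}\Delta_p^{2n+1}\sin((2p-2n-1)x)$ is your full linearization in operator form. The step where one $\Delta_p$ turns $\operatorname{sgn}(2p-2n-1)$ into an indicator does exactly the job of your partial alternating-sum identity.

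Your route is more self-contained: it uses nothing beyond $\int_0^\infty \sin(ax)/x\,dx$ and $\int_0^\infty(1-\cos ax)/x^2\,dx=\frac{\pi}{2}|a|$. The paper's route gives the value for every real $p$ (piecewise constant, resp.\ piecewise linear) at no extra cost. Your closed-form evaluation, by contrast, relies on $p$ being an integer.

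The only step you leave as an expectation is the end of Step 4, and it does go through. Write the cosine expansion symmetrically over $-(n+1)\le j\le n+1$. Your integral is then $-\frac{\pi}{4^{n+1}}\sum_j(-1)^j\binom{2n+2}{n+1-j}|j-p|$. With $k=n+1-j$ and $x=n+1-p$ this becomes $\frac{(-1)^n\pi}{4^{n+1}}\sum_{k=0}^{2n+2}(-1)^k\binom{2n+2}{k}|x-k|$.

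Since $\sum_k(-1)^k\binom{N}{k}(x-k)=0$ for $N\ge2$, you may replace $|x-k|$ by $2(x-k)_+$. Write $(x-k)_+=\sum_{i=k}^{x-1}1$ and exchange the sums. Two applications of your identity then give $2(-1)^{n-p}\binom{2n}{n-p}$ for $0\le p\le n$, and $0$ for $p>n$. This yields $(-1)^p\frac{\pi}{2\cdot4^n}\binom{2n}{n-p}$, as required.
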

The proof in \cite{R2} is obtained as a consequence of the recurrence identity 
\begin{equation}
I_{n,p}=\frac{1}{2}I_{n-1,p}-\frac{1}{4}I_{n-1,p+1}-\frac{1}{4}I_{n-1,p-1}.
\end{equation}
We propose here two different proofs for this result, both proofs producing the value of the integral $I_{n,p}$ for any integer $n$ and \textit{any real number} $p.$ Obviously, this value coincides with Ramanujan's formula \eqref{Inp} at integer values of $p.$
The first proof is based on a lesser-known identity by Cauchy. The second proof uses standard tools of Fourier analysis, and would be the one produced by a current graduate student.
The interested  reader will find in  \cite{Russel} yet another proof based on the more exotic technique of \textit{integration by differentiation}.

\subsection{An identity by Cauchy.}
Our guess is that Ramanujan did not use Fourier analysis\footnote{See the  Conclusion section for further comment.} to compute the integral $I_{n,p}$.  The fact that he computed this  integral for integer values of the parameter $p$ only, strongly suggests that he used a recursive method, like in B. Berndt's proof. 
Why did he not, rather, use one of the following identities due to Cauchy, that appear as Entries 2723 and 2719 respectively  in Carr's Synopsis \cite{Carr}? This remains to us a mystery, all the more since Carr's Synopsis is known as one of the few books that strongly influenced Ramanujan.  
\begin{proposition}
With $\Delta_p$ the forward difference operator acting as $\Delta_p f(p)=f(p+1)-f(p),$ the following identities hold for all integers $n$ and real numbers $p:$
\begin{equation}
\Delta_p^{2n} \cos \left(
\left(2p-2n
\right)x\right) = (-1)^n 2^{2n} \cos\left(2px\right)\sin^{2n}\left(x\right)    
\end{equation}
and
\begin{equation}
\label{Cauchy2}
\Delta_p^{2n+1} \sin \left(
\left(2p-2n-1
\right)x\right) = (-1)^n 2^{2n+1} \cos(2px) \sin^{2n+1}\left(x\right).
\end{equation}
\end{proposition}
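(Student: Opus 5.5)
Both identities follow from one computation: apply the difference operator to a complex exponential in $p$ and take real or imaginary parts. Since $\Delta_p$ is real-linear and commutes with taking real and imaginary parts, it suffices to compute
\[
\Delta_p^{m} e^{\imath(2p-m)x}
\]
for $m=2n$ and $m=2n+1$. Write $g(p)=e^{\imath(2p-m)x}$. Then $g(p+1)=e^{2\imath x}g(p)$, so $\Delta_p g=(e^{2\imath x}-1)g$. The factor $(e^{2\imath x}-1)$ does not depend on $p$, so iterating gives
\[
\Delta_p^{m} g(p)=(e^{2\imath x}-1)^{m}e^{\imath(2p-m)x}.
\]

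Next factor $e^{2\imath x}-1=e^{\imath x}(e^{\imath x}-e^{-\imath x})=2\imath\, e^{\imath x}\sin x$. This gives $(e^{2\imath x}-1)^m=(2\imath)^m e^{\imath m x}\sin^m x$. The phase $e^{\imath m x}$ cancels the shift $-m$ in the exponent, so
\[
\Delta_p^{m} e^{\imath(2p-m)x}=(2\imath)^m \sin^m(x)\, e^{2\imath p x}.
\]
This cancellation is the reason the arguments are shifted by $2n$ and $2n+1$ in the statement.

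For $m=2n$, we have $(2\imath)^{2n}=(-1)^n2^{2n}$, which is real. Taking real parts of both sides gives $\Delta_p^{2n}\cos((2p-2n)x)=(-1)^n2^{2n}\sin^{2n}(x)\cos(2px)$, the first identity.

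For $m=2n+1$, we have $(2\imath)^{2n+1}=\imath(-1)^n2^{2n+1}$. The right side is then $\imath(-1)^n2^{2n+1}\sin^{2n+1}(x)\,(\cos 2px+\imath\sin 2px)$, whose imaginary part is $(-1)^n2^{2n+1}\sin^{2n+1}(x)\cos(2px)$. Taking imaginary parts of both sides gives \eqref{Cauchy2}.

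Nothing here requires $p$ to be an integer, so both identities hold for all real $p$, as stated. The argument is purely algebraic. The only step needing care is checking that the sign $(-1)^n$ and the choice of real versus imaginary part come out correctly in each parity case. As a cross-check, the result can be expanded with the binomial formula $\Delta_p^m h(p)=\sum_{j}\binom{m}{j}(-1)^{m-j}h(p+j)$. For $n=0$ the first identity reads $\cos(2px)=\cos(2px)$, and the second reads $\sin((2p+1)x)-\sin((2p-1)x)=2\cos(2px)\sin x$, which is correct.
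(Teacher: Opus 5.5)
Your proof is correct. The function $e^{\imath(2p-m)x}$ is an eigenfunction of $\Delta_p$ with eigenvalue $e^{2\imath x}-1=2\imath e^{\imath x}\sin x$, and the phase $e^{\imath mx}$ absorbs the shift exactly. Your handling of $(2\imath)^{2n}=(-1)^n2^{2n}$ and $(2\imath)^{2n+1}=\imath(-1)^n2^{2n+1}$, together with the choice of real and imaginary parts, is right. You tacitly take $n\ge 0$, which is the only case where $\Delta_p^{2n}$ makes sense. You also take $x$ real, so that $\Delta_p$ commutes with $\mathrm{Re}$ and $\mathrm{Im}$; for complex $x$, apply the same computation to $e^{\pm\imath(2p-m)x}$ separately.

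The paper gives no proof of this proposition. It simply quotes the two identities from Carr's Synopsis (Entries 2723 and 2719, attributed to Cauchy), so your argument supplies a self-contained derivation rather than competing with one.

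Your argument also ties together the paper's two computations of $I_{n,p}$. Apply the binomial expansion from your cross-check, $\Delta_p^{2n}h(p)=\sum_{j=0}^{2n}\binom{2n}{j}(-1)^{j}h(p+j)$, to $h=\pi\mathds{1}_{[n-\frac12,n+\frac12]}$. This gives $\pi\sum_{j}\binom{2n}{j}(-1)^{j}\mathds{1}_{[-\frac12,\frac12]}(p+j-n)$, which turns the Cauchy formula for $I_{n,p}$ directly into the Fourier formula \eqref{InpFourier}. That is the ``easy exercise'' the paper leaves to the reader.

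The agreement is not an accident. Divide your identity $(e^{2\imath x}-1)^{2n}e^{\imath(2p-2n)x}=(2\imath)^{2n}\sin^{2n}(x)\,e^{2\imath px}$ by $e^{2\imath px}$ and expand the left side binomially. The result is precisely the expansion of $\sin^{2n}x$ used in the Fourier section.
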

The second identity produces indeed a straightforward computation of the integral $I_{n,p}$:
applying rule \eqref{Cauchy2} to Ramanujan's  identity \eqref{Inp} and using the linearity property of the operator $\Delta_p$ produces
\begin{equation}
I_{n,p}=\frac{(-1)^n}{ 2^{2n+1}}
\Delta_p^{2n+1} \int_{0}^{\infty}
\frac{\sin \left(
\left(2p-2n-1
\right)x\right)}{x}dx. 
\end{equation}
The integral is easily computed as $\frac{\pi}{2}  \text{sign} (2p-2n-1)$ so that the action of $\Delta_p$ on this integral is equal to the indicator function $\pi  \mathds{1}_{\left[n-\frac{1}{2},n+\frac{1}{2}\right]}(p)$ and
\begin{equation}
I_{n,p} = \pi\frac{(-1)^n}{ 2^{2n+1}}
\Delta_p^{2n}   \mathds{1}_{\left[n-\frac{1}{2},n+\frac{1}{2}\right]}(p).
\end{equation}
It is an easy exercise left to the reader to check that, for all integer values $n \ge 0,$ this formula for $I_{n,p}$ coincides with formula \eqref{InpFourier} for all real values of $p,$ and with Ramanujan's formula \eqref{Inp2} for all integer values of $p.$

\subsection{Fourier analysis.}
The second proof uses elementary techniques from Fourier analysis. Recall the Fourier transform $FT[f]$ of a function $f$ as the function 
\begin{equation}
FT \left[ f\right](\nu) =  \int_{-\infty}^{\infty}f(x) e^{-\imath2\pi \nu x}dx.
\end{equation}
The parity of the function $\frac{\sin^{2n+1}x}{x}$ allows us to identify the integral $I_{n,p}$ in \eqref{Inp} as the Fourier transform
\begin{equation}
I_{n,p}=
\frac{1}{2}\int_{-\infty}^{\infty}\frac{\sin^{2n+1}x}{x}e^{\imath2px}dx
=\frac{1}{2}FT\left[\frac{\sin^{2n+1}x}{x}\right](\nu)
\end{equation}
with $\nu = \frac{p}{\pi}.$ 
Next, express this integral as 
\begin{equation}
    \int_{-\infty}^{\infty}\frac{\sin x}{x}\sin^{2n}xe^{\imath2px}dx,
\end{equation}
and substitute in the integral 
the binomial expansion
\begin{equation}
\sin^{2n}\left(x\right)
=\left(\frac{e^{\imath x}-e^{-\imath x}}{2\imath}\right)^{2n}
=
\frac{1}{\left(2\imath\right)^{2n}}\sum_{k=0}^{2n}\binom{2n}{k}\left(-1\right)^{k}e^{\imath\left(2k-2n\right)x}
\end{equation}
to obtain
\begin{equation}
 \frac{1}{\left(2\imath\right)^{2n}}\sum_{k=0}^{2n}\binom{2n}{k}
\left(-1\right)^{k}
\int_{-\infty}^{\infty}\frac{\sin x}{x}
 e^{\imath\left(2k-2n+2p\right)x}dx.
\end{equation}
Using the fact that the Fourier transform of the  cardinal sine  function is 
\begin{equation}
FT\left[\frac{\sin x}{x}\right]=\mathds{1}_{\left[-\frac{1}{2\pi},\frac{1}{2\pi}\right]}\left(\nu\right),
\end{equation}
the indicator function of the interval $\left[-\frac{1}{2\pi},\frac{1}{2\pi}\right],$ we deduce after simplification 

\begin{equation}
\label{InpFourier}
I_{n,p}=\pi \frac{\left(-1\right)^{n}}{2^{2n+1}}\sum_{k=0}^{2n}\binom{2n}{k}
\left(-1\right)^{k}
\mathds{1}_{\left[-\frac{1}{2},\frac{1}{2}\right]}(p+k-n).
\end{equation}

This is an even, piecewise constant function of the real variable $p$. Its values at integer argument $p$ are easily shown to coincide with Ramanujan's result \eqref{Inp}. For example, when $n=1,$
\begin{align}
I_{1,p}=
\begin{cases}
0, & \vert p\vert \ge\frac{3}{2}\\
-\frac{\pi}{8}, & \frac{1}{2}\le \vert p\vert <\frac{3}{2}\\
\frac{\pi}{4}, & 0\le \vert p\vert <\frac{1}{2}.
\end{cases}
\end{align}
The values $I_{1,0}=\frac{\pi}{4},$ $I_{1,\pm 1}=-\frac{\pi}{8}$ and $I_{1,p}=0$ for integer $p\in \mathbb{Z} \setminus \left\{-1,0,1\right\}$ coincide with those given by Ramanujan's formula \eqref{Inp}.

Using the same arguments in the case of integral \eqref{Inp2} and the fact that 
\begin{equation}
FT\left[\frac{\sin^2 x}{x^2}\right]=\mathds{T}_{\left[-\frac{1}{\pi},\frac{1}{\pi}\right]}\left(\nu\right),
\end{equation}
where the triangular function over the interval $\left[-\frac{1}{\pi},\frac{1}{\pi}\right]$ is defined by
\[
\mathds{T}_{\left[-\frac{1}{\pi},\frac{1}{\pi}\right]}(\nu) = \begin{cases}
    1-\pi\vert \nu \vert, & \vert \nu \vert < \frac{1}{\pi}  \\
    0, & \text{else},
\end{cases}
\]
we obtain the expansion
\[
\int_{0}^{\infty}\frac{\sin^{2n+2}x}{x^{2}}\cos\left(2px\right)dx\\
=\pi \frac{\left(-1\right)^{n}}{2^{2n+1}}\sum_{k=0}^{2n}\binom{2n}{k}
\left(-1\right)^{k}
\mathds{T}_{\left[-1,1\right]}(p+k-n),
\]
a piecewise linear function
that coincides with $I_{n,p}$ as given in \eqref{InpFourier} in the case where $p$ is an integer.

\section{Conclusion.}
It is our hope that this modest collection of examples will spark in young mathematicians a desire to explore Ramanujan’s universe for themselves. The remarkable breadth of Ramanujan’s work, spanning subjects from number theory to special functions, makes it easy for the interested reader to begin their journey from whatever domain they find most appealing.

Beside the five notebooks and the five lost notebooks, Ramanujan's collected papers \cite{Ramanujan Collected} represent an alternate entry point to his  monumental work. In addition to all papers published by Ramanujan, they include the solutions that he wrote to several problems published in the {\it  Journal of the Indian Mathematical Society}. Since all involve proofs, they give a glimpse at Ramanujan's way of thinking. 

Here is an example:  Question 295 \cite[p.324]{Ramanujan Collected}  in the {\it Journal of the Indian Mathematical Society} is
\begin{question}
    If $\alpha \beta = \pi,$ show that
    \[
    \sqrt{\alpha} \int_{0}^{\infty} \frac{e^{-x^2}dx}{\cosh \alpha x} = \sqrt{\beta} \int_{0}^{\infty} \frac{e^{-x^2}dx}{\cosh \beta x}.
    \]
\end{question} 
Ramanujan's solution to this question seems to indicate that he was not familiar with  Plancherel-Parseval's identity
\[
\int_{-\infty}^{\infty}f(x)\bar{g}(x)dx = \int_{-\infty}^{\infty}\tilde{f}(\nu)\overline{\tilde{g}}(\nu)d\nu
\]
(with $\tilde{f}=FT\left[f\right],$ $\tilde{g}=FT\left[g\right]$ and $\bar{g}$ the complex conjugate of $g$),
which would be the most appropriate tool to answer this question. However, Section 4 in \cite[p.55]{Ramanujan Collected} shows that Ramanujan had, in fact, created his own  and more general version of it:
\begin{proposition}
    Let
    \[
    \int_{a}^{b} f(x)F(nx)dx = \psi(n)
    \]
    and
    \[
    \int_{\alpha}^{\beta} \phi(x)F(nx)dx = \chi(n).
    \]
    Then, if we suppose the functions $f$, $\phi$, and $F$ to be such that the order of integration is indifferent, we have
    \[
    \int_{a}^{b} f(x)\chi(nx)dx =\int_{\alpha}^{\beta} \phi(y)\psi(ny)dy.
    \]
\end{proposition}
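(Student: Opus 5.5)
The identity is a Fubini-type interchange, so I would prove it by writing a single double integral and evaluating it in the two possible orders. Consider
\[
J(n)=\int_{a}^{b}\int_{\alpha}^{\beta} f(x)\,\phi(y)\,F(nxy)\,dy\,dx .
\]
The hypothesis that ``the order of integration is indifferent'' is exactly what allows me to compute $J(n)$ in either order and equate the results. For concreteness, I would take this to mean that $f(x)\phi(y)F(nxy)$ is absolutely integrable on $[a,b]\times[\alpha,\beta]$, so that Fubini's theorem applies. In that case both iterated integrals below exist and agree.

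First integrate in $y$ with $x$ fixed. The inner integral is $\int_{\alpha}^{\beta}\phi(y)F\bigl((nx)y\bigr)\,dy$, which is the definition of $\chi$ with the parameter $nx$ in place of $n$. So it equals $\chi(nx)$, and hence $J(n)=\int_a^b f(x)\chi(nx)\,dx$.

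Next integrate in $x$ with $y$ fixed. The inner integral is $\int_a^b f(x)F\bigl((ny)x\bigr)\,dx=\psi(ny)$, so $J(n)=\int_\alpha^\beta \phi(y)\psi(ny)\,dy$. Equating the two expressions for $J(n)$ gives the claim.

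The main obstacle is not the computation but the interpretation. The definitions of $\psi$ and $\chi$ have to hold for every parameter value that appears, namely $nx$ for $x\in[a,b]$ and $ny$ for $y\in[\alpha,\beta]$, and not only for the original $n$. I would make this explicit by reading $\psi$ and $\chi$ as defined by those integrals for all real arguments in the relevant range. I would also state the absolute-integrability (or Tonelli nonnegativity) assumption as the precise meaning of Ramanujan's phrase. Once these two points are fixed, the proof is exactly the two lines above.
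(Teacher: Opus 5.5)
Your argument is correct. Evaluating $\int_a^b\int_\alpha^\beta f(x)\phi(y)F(nxy)\,dy\,dx$ in both orders gives $\int_a^b f(x)\chi(nx)\,dx$ and $\int_\alpha^\beta \phi(y)\psi(ny)\,dy$, and this is the double-integral interchange that Ramanujan's hypothesis ``the order of integration is indifferent'' refers to. The paper itself gives no proof; it only quotes the statement from Ramanujan's Collected Papers. Your reading of that hypothesis is a sensible way to make it precise: absolute integrability of $f(x)\phi(y)F(nxy)$ on the rectangle, with $\psi$ and $\chi$ defined at the rescaled arguments $nx$ and $ny$, where almost everywhere suffices and Fubini provides it.
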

The Plancherel-Parseval identity corresponds to the special case $F(x)=e^{-\imath 2 \pi x},\,\,\alpha=a=-\infty,\,\,\beta=b=+\infty.$ In a remark that characterizes accurately his sense for discovery, Ramanujan adds: 
``A number of curious relations between definite integrals may be deduced from this result.''
More insight about Ramanujan's use of the Fourier method can be found in the dedicated Chapter 13 of \cite{R4Lost}.\\

\end{document}